\theoremstyle{plain}
\newtheorem{theo}{Theorem}
\newtheorem*{theo*}{Theorem}
\newtheorem{lemm}[theo]{Lemma}
\theoremstyle{definition}
\numberwithin{equation}{section}
\numberwithin{theo}{section}
\newcommand{\Irr}{\mathrm{Irr}}
\newcommand{\Cl}{\mathrm{Cl}}
\newcommand{\II}{{I\!I}}
\newcommand{\m}{\mathfrak m}
\newcommand{\Gal}{{\rm Gal}}
\newcommand{\Tr}{{\rm Tr}}
\newcommand{\QQ}{\mathbb Q}
\newcommand{\EE}{\mathbb E}
\begin{document}
\title[Denseness results for zeros and roots of unity]{Denseness results for zeros and\\ roots of unity in character tables}
\author{Alexander Rossi Miller}
\address{Center for Communications Research, Princeton}
\begin{abstract}
For any irreducible character $\chi$ of a finite group $G$,
  let $\theta(\chi)$ denote the proportion of elements $g\in G$ for
  which $\chi(g)$ is either zero or a root of unity. Then 
  for any $L\in[\frac{1}{2},1]$ and any $\epsilon>0$, there exists
  an irreducible character $\chi$ of a finite group such that $|\theta(\chi)-L|<\epsilon$.
\end{abstract}
\maketitle
\thispagestyle{empty}
\section{Introduction}
For any irreducible character $\chi$ of a finite group $G$, let
\[\theta(\chi)=\frac{|\{g\in G: \chi(g)\text{ is zero or a root of unity}\}|}{|G|},\]
so $\theta(\chi)$ is the proportion of elements $g\in G$ for which $\chi(g)$ is zero or a root of unity.
Using a result of C.\ L.\ Siegel \cite{Siegel}, J.\ G.\ Thompson \cite[p.~46]{Isaacs} 
showed that $\theta(\chi)>\frac{1}{3}$.
We show that there are no gaps past $\frac{1}{2}$ in the following sense.

\begin{theo}\label{theorem 1}
  For any $L\in\left[\frac{1}{2},1\right]$ and any $\epsilon>0$, there exists
  an irreducible character~$\chi$ of a finite group such that $|\theta(\chi)-L|<\epsilon$. 
\end{theo}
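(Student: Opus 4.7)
The case $L = 1$ follows from any linear character. For $L \in [\tfrac{1}{2}, 1)$ the plan is a tensor-product reduction to the density of zero proportions of irreducible characters, followed by a construction realising a dense set of such proportions.

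For the reduction, fix an odd prime $p \geq 5$ and let $\eta$ be the faithful $2$-dimensional irreducible character of the dihedral group $D_p$. Then $\eta$ vanishes on the $p$ reflections, $\eta(1) = 2$, and $\eta(r^j) = 2\cos(2\pi j/p)$ for $j = 1, \ldots, p-1$; the non-identity rotational values are irrational algebraic integers of degree $(p-1)/2$ in $\QQ(\zeta_p)^+$. For any irreducible character $\psi$ of a finite group $H$ with $p \nmid |H|$, the tensor $\eta \otimes \psi$ is an irreducible character of $D_p \times H$ satisfying
\[
  \theta(\eta \otimes \psi) \;=\; \frac{1}{2} \;+\; \frac{1}{2} \cdot \frac{|\{h \in H : \psi(h) = 0\}|}{|H|}.
\]
A nonzero product $\eta(g)\psi(h)$ cannot be a root of unity: for $g = 1$ we would need $\psi(h) = \zeta/2$ for some root of unity $\zeta$, which is not an algebraic integer whereas $\psi(h)$ must be; for $g$ a non-identity rotation, linear disjointness of $\QQ(\zeta_p)^+$ and $\QQ(\psi) \subseteq \QQ(\zeta_{|H|})$ (valid since $p \nmid |H|$) lets us vary the two Galois factors independently, and fixing the $\QQ(\psi)$-factor while varying the $\QQ(\zeta_p)^+$-factor forces the conjugates of $\eta(g)\psi(h)$ to take $(p-1)/2 \geq 2$ distinct absolute values, contradicting any root-of-unity condition.

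The theorem thus reduces to density in $[0, 1]$ of the set of zero-proportions $|\{h : \psi(h) = 0\}|/|H|$ as $\psi$ ranges over irreducible characters of finite groups $H$. Natural base characters include: linear characters (proportion $0$); faithful $2$-dimensional characters of $D_n$ (proportion $\tfrac{1}{2} + O(1/n)$, dense near $\tfrac{1}{2}$ as $n$ varies); the standard degree-$3$ character of $A_4$ (exactly $\tfrac{2}{3}$); induced characters from Frobenius complements of large index (arbitrarily close to $1$). Zero proportions combine under tensor products via
\[
  1 - P[\psi_1 \otimes \psi_2 = 0] \;=\; (1 - P[\psi_1 = 0])(1 - P[\psi_2 = 0]),
\]
so iterated tensor products of these base characters generate many attainable values.

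The main obstacle is proving density throughout $[0, 1]$: tensor products of any finite collection of base characters yield zero proportions clustering at values of the form $1 - 2^{-a}3^{-b}\cdots$, leaving gaps in between. Closing the gaps probably requires either an equidistribution argument leveraging $\QQ$-linear independence of $\log q$ for distinct primes $q$ (so that non-negative integer combinations of $\log(1-p_i)$ are dense in $[0, \infty)$), or a direct construction using generalised dihedral characters whose order parameter is chosen to have many small prime factors. This density verification is, I expect, the real content of the proof.
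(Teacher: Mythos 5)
Your reduction step is correct and genuinely different from the paper's: the paper kills roots of unity in the product by taking a degree-$2$ character $\gamma$ of a nilpotent dihedral $2$-group (so that Theorem~\ref{cav}, the consequence of \cite{Miller2024}, forces $u_I(\gamma)=0$) and then tensoring with characters of $L_2(2^r)$ whose nonzero values off the identity are $\pm 1$; you instead use the degree-$2$ character $\eta$ of $D_p$ for an odd prime $p\geq 5$ together with a Galois/linear-disjointness argument showing that a nonzero product $\eta(g)\psi(h)$ has conjugates of at least two distinct absolute values and so cannot be a root of unity. That argument is sound (one should record why the conjugates $2\cos(2\pi k/p)$ do not all share one absolute value, which holds since $2(k+k')=p$ is impossible for $p$ odd), and it yields the clean identity $\theta(\eta\otimes\psi)=\frac{1}{2}+\frac{1}{2}z_I(\psi)$ with no error term, whereas the paper's $\gamma$ only gives $z_I(\gamma)=\frac{1}{2}+2^{-n}$.

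The genuine gap is the step you defer: density in $[0,1]$ of the zero proportions $z_I(\psi)$. You leave this unproven and, moreover, misdiagnose its difficulty. No equidistribution of logarithms and no combination of several base characters is needed: a single irreducible character $\psi_0$ with $0<z_I(\psi_0)<\epsilon$ already suffices, because $z_I(\psi_0^{k+1})=z_I(\psi_0^k)+(1-z_I(\psi_0^k))z_I(\psi_0)$, so the sequence $z_I(\psi_0^k)$ increases to $1$ (its limit $\ell$ satisfies $\ell=\ell+(1-\ell)z_I(\psi_0)$ and $z_I(\psi_0)\neq 0$) with every step of size at most $z_I(\psi_0)<\epsilon$; since it also starts below $\epsilon$, every $L\in[0,1]$ is within $\epsilon$ of some term. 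The lacunary values $1-2^{-a}3^{-b}\cdots$ you worried about are therefore irrelevant --- you only need to land within $\epsilon$ of $L$, not on it. The paper supplies such a $\psi_0$ explicitly: $\chi_q=\pi_q-1$ on $L_2(q)$ with $q=2^r>1/\epsilon$ vanishes on one class of size $|L_2(q)|/q$, so $z_I(\chi_q)=1/q<\epsilon$; choosing $p$ coprime to $|L_2(q)|$ (or simply choosing $p$ after $q$) then closes your argument. As written, your proposal establishes only the reduction, not the theorem.
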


We also establish results for the following statistics.
For any finite group~$G$, let $\Irr(G)$ denote 
the set of irreducible characters of $G$ and let $\Cl(G)$ denote the set of conjugacy classes of $G$. For $\chi\in\Irr(G)$, let 
\begin{alignat*}{2}
  z_I(\chi)&=\frac{|\{g\in G: \chi(g)=0\}|}{|G|},&
  z_\II(\chi)&=\frac{|\{g^G\in \Cl(G) : \chi(g)=0\}|}{|\Cl(G)|},\\
  u_I(\chi)&=\frac{|\{g\in G: |\chi(g)|=1\}|}{|G|},\qquad&
  u_\II(\chi)&=\frac{|\{g^G\in \Cl(G) : |\chi(g)|=1\}|}{|\Cl(G)|},
\end{alignat*}
and
 \[\theta_\II(\chi)
   =\frac{|\{g^G\in \Cl(G) : \chi(g)\text{ is zero or a root of unity}\}|}{|\Cl(G)|}.\]
Since $|\chi(g)|=1$ if and only if $\chi(g)$ is a root of unity, we have 
 $\theta=z_I+u_I$ and $\theta_\II=z_\II+u_\II$.
\begin{theo}\label{theorem 2}
  Let $f\in\{z_I,z_\II, u_I,u_\II,\theta_\II\}$.
  Then for any 
  $L\in[0,1]$ and any ${\epsilon>0}$, 
  there exists 
  an irreducible character $\chi$ of a finite group 
  such that $|f(\chi)-L|<\epsilon$.
\end{theo}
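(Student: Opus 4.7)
The plan is to exploit multiplicative identities satisfied by these statistics under external tensor products on direct products, reducing each density claim to a sumset density argument on the logarithmic scale.

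For irreducible $\chi_i \in \Irr(G_i)$, the tensor product $\chi = \chi_1 \otimes \chi_2 \in \Irr(G_1 \times G_2)$ satisfies $\chi(g_1, g_2) = \chi_1(g_1) \chi_2(g_2)$, and $\Cl(G_1 \times G_2) = \Cl(G_1) \times \Cl(G_2)$. Hence
\[
1 - z_I(\chi) = (1 - z_I(\chi_1))(1 - z_I(\chi_2)),
\]
and the identical formula for $z_\II$. If both $\chi_i$ are integer-valued, then $|\chi_1(g_1) \chi_2(g_2)| = 1$ forces each factor to equal $\pm 1$, so $u_I$ and $u_\II$ are multiplicative as well. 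For $\theta_\II$, the identity $1 - \theta_\II(\chi) = (1 - \theta_\II(\chi_1))(1 - \theta_\II(\chi_2))$ holds under the additional hypothesis $u_\II(\chi_1) = u_\II(\chi_2) = 0$, i.e.\ that each building block's values avoid $\pm 1$ on every class. Density of each statistic in $[0,1]$ thus reduces to producing integer-valued building blocks whose corresponding $(1-z)$ or $u$ quantity generates a dense multiplicative subsemigroup of $(0,1]$; equivalently, passing to $-\log$, to producing a family with logarithms accumulating at $0$ (by the elementary lemma that any subset of $(0, \infty)$ with infimum $0$ generates a dense additive subsemigroup, combined with the trivial character to reach the endpoint $0$).

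For $z_I, u_I, z_\II, u_\II$, I would use the Steinberg character $\text{St}_q$ of $\GL_2(\mathbb{F}_q)$: a direct count (it vanishes on non-central unipotent classes, takes value $1$ on split semisimple classes, $-1$ on non-split semisimple classes, and $q$ on the center) gives the integer-valued quantities $z_I(\text{St}_q) = 1/q$, $u_I(\text{St}_q) = (q^2 - q - 1)/(q^2 - 1)$, $z_\II(\text{St}_q) = 1/(q+1)$, $u_\II(\text{St}_q) = (q-1)/(q+1)$. As $q$ ranges over prime powers, all four $-\log$ values tend to $0$, giving the required accumulation and hence density in $[0, 1]$.

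The harder case is $\theta_\II$, where multiplicativity of $1 - \theta_\II$ requires integer-valued building blocks with $u_\II = 0$, i.e.\ values in $\{0\} \cup (\mathbb{Z} \setminus \{\pm 1\})$. For $1 - \theta_\II$ close to $0$ one may use the $2^n$-dimensional faithful character of an extraspecial $2$-group of order $2^{1+2n}$, whose values lie in $\{\pm 2^n, 0\}$ and give $1 - \theta_\II = 2/(2^{2n}+1) \to 0$. The main obstacle is producing integer-valued irreducible characters with $u_\II = 0$ and $z_\II$ arbitrarily close to $0$, so that $1 - \theta_\II$ can be made close to $1$; Burnside's theorem forces at least one vanishing class for every nonlinear irreducible, so only quantitative smallness is attainable, and this seems to require groups with a large number of conjugacy classes together with a careful choice of character (for example, by taking a $Q_8$-type character and inflating along a semidirect product or extraspecial extension, so that $\pm 1$ values are avoided while the proportion of zero classes is kept small). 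Once such a family is available, the same sumset density lemma closes the proof.
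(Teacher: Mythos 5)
Your treatment of $z_I$, $z_\II$, $u_I$, and $u_\II$ is correct and is essentially the paper's argument in different clothing: the paper uses the degree-$q$ character of ${\rm PSL}_2(2^r)$ (values $q$, $\pm1$, $0$) where you use the Steinberg character of $\GL_2(\mathbb F_q)$; your computed proportions check out, the multiplicativity of $1-z$ and (for integer-valued characters) of $u$ is valid, and your dense-multiplicative-subsemigroup argument is the same "starts near the endpoint and moves in steps of size less than $\epsilon$" device the paper packages as its Limit Lemma.

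The genuine gap is the $\theta_\II$ case, which you have located but not closed. You need a family of irreducible characters with $u_\II=0$ (stably under external powers) and $z_\II\to 0$, and you only gesture at ``a $Q_8$-type character inflated along a semidirect product or extraspecial extension'' without producing one. Note that your framework demands integer values lying outside $\{0,\pm1\}$, which is genuinely restrictive: the obvious candidates ($Q_8$, extraspecial $2$-groups) all have $z_\II$ bounded well away from $0$, and it is not clear such an integer-valued family exists. The paper instead takes the faithful degree-$2$ character $\eta_1$ of the dihedral $2$-group of order $2^{n+1}$, which vanishes on exactly $3$ of its $2^{n-1}+3$ classes, so $z_\II(\eta_1)\to0$; its values $2\cos(\pi k/2^{n-1})$ are not integers, but the theorem that a nonlinear irreducible character of a finite nilpotent group never takes a root-of-unity value (Theorem~\ref{cav}, from \cite{Miller2024}) gives $u_\II(\eta_1^k)=0$ for every $k$, which is exactly the input that replaces your integrality hypothesis. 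Some such input really is needed: for non-integer-valued factors, $u_\II(\chi_1)=u_\II(\chi_2)=0$ does not by itself imply $u_\II(\chi_1\times\chi_2)=0$, since a product of two algebraic integers, neither of absolute value $1$, can have absolute value $1$. Without either that nilpotency theorem or an explicit integer-valued family, your $\theta_\II$ case remains open.
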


The parts of Theorem~\ref{theorem 2} concerning $z_I(\chi)$ and $z_\II(\chi)$ are local analogues of results established by the author in~\cite{Miller2019}.
For any finite group $G$, let 
\begin{align*}
  z_I(G)&={\EE}_{\chi\in\Irr(G)}(z_I(\chi))=
  \frac{|\{(\chi,g)\in \Irr(G)\times G : \chi(g)=0\}|}{|\Irr(G)\times G|}\\
\intertext{and}
z_\II(G)&={\EE}_{\chi\in\Irr(G)}(z_\II(\chi))=
  \frac{|\{(\chi,g^G)\in \Irr(G)\times \Cl(G) : \chi(g)=0\}|}{|\Irr(G)\times \Cl(G)|},
  \end{align*}
so $z_\II(G)$ is the fraction of the character table of $G$ that is covered by zeros.
The main result of \cite{Miller2019} is as follows.
\begin{theo*}
 The sets $\{z_I(G):|G|<\infty\}$ and $\{z_\II(G): |G|<\infty\}$
  are dense subsets of the interval $[0,1]$.
  \end{theo*}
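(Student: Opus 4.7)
The plan is to combine two carefully chosen small base groups via direct products; their values of $z_I$ and $z_\II$ multiply in a controlled way, and by choosing the bases so that the resulting densities are multiplicatively independent, a Kronecker--Weyl density argument will fill out $[0,1]$. The first step is the multiplicativity. Every irreducible character of a direct product $A \times B$ is an outer tensor product $\chi \boxtimes \psi$ with $\chi \in \Irr(A)$ and $\psi \in \Irr(B)$, and $(\chi \boxtimes \psi)(a,b) = \chi(a)\psi(b)$ vanishes iff one of the two factors vanishes; the same remark applies to conjugacy classes, which in $A \times B$ are just products $a^A \times b^B$. This yields
\[
1 - z_I(A \times B) = (1 - z_I(A))(1 - z_I(B)), \qquad 1 - z_\II(A \times B) = (1 - z_\II(A))(1 - z_\II(B)).
\]

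Take $A = D_5$ and $B = D_7$. Each $D_n$ with $n$ odd has $(n-1)/2$ two-dimensional irreducibles vanishing on the $n$ reflections and two linear characters that never vanish; a direct count gives $z_I(D_5) = \tfrac{1}{4}$, $z_I(D_7) = \tfrac{3}{10}$, $z_\II(D_5) = \tfrac{1}{8}$, and $z_\II(D_7) = \tfrac{3}{25}$. Hence for $G_{a,b} := D_5^a \times D_7^b$ one has
\[
1 - z_I(G_{a,b}) = \bigl(\tfrac{3}{4}\bigr)^a \bigl(\tfrac{7}{10}\bigr)^b, \qquad 1 - z_\II(G_{a,b}) = \bigl(\tfrac{7}{8}\bigr)^a \bigl(\tfrac{22}{25}\bigr)^b.
\]
Both logarithmic ratios are irrational by unique factorization; for instance, $(3/4)^m = (7/10)^n$ would give $3^m \cdot 2^n \cdot 5^n = 2^{2m} \cdot 7^n$, forcing $m = n = 0$ by comparing the primes $3, 5, 7$, and the pair $(7/8, 22/25)$ is handled the same way via the primes $5, 7, 11$. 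A standard Kronecker--Weyl argument then shows that the sets $\{a \log(4/3) + b \log(10/7): a, b \in \mathbb N_0\}$ and $\{a \log(8/7) + b \log(25/22): a, b \in \mathbb N_0\}$ are each dense in $[0, \infty)$, so $\{(3/4)^a (7/10)^b\}$ and $\{(7/8)^a (22/25)^b\}$ are each dense in $(0, 1]$. Combined with any abelian example (for which $z_I = z_\II = 0$), this gives density in $[0, 1]$ for both quantities.

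The essential content of the argument is number-theoretic rather than representation-theoretic: one needs a single pair of finite groups whose non-vanishing character-table densities are simultaneously multiplicatively independent for both $z_I$ and $z_\II$. The fortunate arithmetic fact is that the four rationals $\tfrac{3}{4}, \tfrac{7}{10}, \tfrac{7}{8}, \tfrac{22}{25}$ involve the primes $2,3,5,7,11$ in a pattern that trivializes the relevant prime-factorization checks, so that $D_5$ and $D_7$ do double duty. The only other technicality is the Kronecker--Weyl density of \emph{nonnegative} integer combinations (as opposed to all integer combinations) of two positive reals with irrational ratio; this is standard and follows by combining equidistribution of $\{a\alpha \bmod \beta: a \in \mathbb N\}$ in $[0,\beta]$ with the freedom to add arbitrarily many extra copies of $\beta$.
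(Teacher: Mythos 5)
Your multiplicativity identity $1-z(A\times B)=(1-z(A))(1-z(B))$ for $z\in\{z_I,z_\II\}$ is correct and is exactly the recursion the paper exploits (equation \eqref{z step} in Lemma~\ref{Limit Lemma}), and your counts for $D_5$ and $D_7$ check out. But the Kronecker--Weyl step is false, and irrationality of $\log(4/3)/\log(10/7)$ cannot rescue it: for positive reals $\alpha,\beta$, the set $S=\{a\alpha+b\beta: a,b\in\mathbb{Z}_{\geq 0}\}$ meets every bounded interval $[0,T]$ in at most $(T/\alpha+1)(T/\beta+1)$ points, so $S$ is closed and discrete and is dense in no interval at all. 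Consequently $\{(3/4)^a(7/10)^b\}$ is a discrete subset of $(0,1]$ whose only accumulation point is $0$, and $z_I(D_5^a\times D_7^b)$ accumulates only at $1$; for instance its smallest nonzero value is $1/4$, so no point of $(0,1/4)$ is even approached. Kronecker's theorem genuinely requires coefficients of both signs (it is $\alpha\mathbb{Z}+\beta\mathbb{Z}$ that is a dense subgroup of $\mathbb{R}$), and your proposed patch --- choose $a$ with $a\alpha\bmod\beta$ near $t\bmod\beta$, then add copies of $\beta$ --- only reaches targets $t\geq a\alpha$; since the $a$ demanded by equidistribution grows without bound as the tolerance shrinks, a fixed target $t$ cannot be approximated arbitrarily well.

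The repair is to let the base group vary with $\epsilon$ instead of fixing two bases once and for all; then no independence or equidistribution input is needed. This is what the paper does: for each $\epsilon>0$ choose a single nonabelian group $H$ with $0<z(H)<\epsilon$ (the paper uses extraspecial $2$-groups $H_n$, for which $z_I(H_n)$ and $z_\II(H_n)$ tend to $0$; note that your dihedral family cannot play this role for $z_I$, since $z_I(D_p)=\frac{p-1}{2(p+3)}\to\frac{1}{2}$). Your own identity then gives $z(H^k)=1-(1-z(H))^k$, an increasing sequence starting at $z(H)<\epsilon$ and tending to $1$ with consecutive steps $(1-z(H^k))z(H)<\epsilon$; together with the trivial group this forms an $\epsilon$-net of $[0,1]$, and letting $\epsilon\to 0$ gives density.
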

For any finite group $G$, let
\begin{align*}
  \theta(G)&=\frac{|\{(\chi,g)\in \Irr(G)\times G : \chi(g)\text{ is zero or a root of unity}\}|}{|\Irr(G)\times G|},\\
  \theta_\II(G)&=\frac{|\{(\chi,g^G)\in \Irr(G)\times \Cl(G) : \chi(g)\text{ is zero or a root of unity}\}|}{|\Irr(G)\times \Cl(G)|},\\
  u_I(G)&=\frac{|\{(\chi,g)\in \Irr(G)\times G : \chi(g)\text{ is a root of unity}\}|}{|\Irr(G)\times G|},\\
  u_\II(G)&=\frac{|\{(\chi,g^G)\in \Irr(G)\times \Cl(G) : \chi(g)\text{ is a root of unity}\}|}{|\Irr(G)\times \Cl(G)|}.
\end{align*}
The global analogues of Theorems~\ref{theorem 1} and~\ref{theorem 2} are as follows.
\begin{theo}\label{global 1/2}
  For any $L\in\left[\frac{1}{2},1\right]$ and any $\epsilon>0$, there exists
  a finite group $G$ such that
  $|\theta(G)-L|<\epsilon$.
\end{theo}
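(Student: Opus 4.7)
The averaging identity $\theta(G)=\EE_{\chi\in\Irr(G)}\theta(\chi)$ reduces the problem to finding finite groups whose character tables have \emph{average} $\theta$-value close to $L$. The endpoint $L=1$ is immediate: every linear character satisfies $|\chi(g)|=1$ identically, so $\theta(\chi)=1$ for every $\chi\in\Irr(A)$ when $A$ is abelian, giving $\theta(A)=1$.

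For $L\in[\tfrac12,1)$ my plan is to leverage Theorem~\ref{theorem 1} by first choosing a group $H$ and an irreducible character $\chi_0\in\Irr(H)$ with $|\theta(\chi_0)-L|<\epsilon/2$, and then producing a finite group $G$---built from $H$---in which an overwhelming proportion of $\Irr(G)$ consists of characters with $\theta$-value close to $\theta(\chi_0)$. A useful preliminary observation is that tensoring with abelian groups is neutral: $\theta(H\times A)=\theta(H)$ for every abelian $A$, because each $(\chi\otimes\psi)(h,a)=\chi(h)\psi(a)$ has the same modulus and zero-locus as $\chi(h)$ when $\psi$ is linear. So one is free to enlarge $H$ by abelian factors without affecting $\theta$.

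The key step is to inspect the construction of $H$ used in the proof of Theorem~\ref{theorem 1} and verify that most of its other irreducible characters have $\theta$-value close to $\theta(\chi_0)$ as well---or, failing that, to replace $H$ by a related group (for example, a wreath product $H\wr C_n$, whose irreducible characters are governed by Clifford theory) in which the ``$\chi_0$-block'' becomes asymptotically dominant as $n\to\infty$.

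The hardest part will be to prevent the remaining irreducible characters from pulling the average $\theta(G)$ away from $\theta(\chi_0)$: every linear character of $G$ contributes exactly $1$ to the average, and non-linear characters may contribute any value in $(\tfrac13,1]$ by Thompson's bound. Thus one must take $H$ with very few linear characters (e.g.\ $H$ perfect) and with a tight clustering of non-linear $\theta$-values, or dilute the exceptional characters via a large-index construction. This quantitative bookkeeping---counting and estimating the contributions of the exceptional characters against a target of $\theta(\chi_0)$---is the technical core of the argument.
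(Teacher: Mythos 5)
Your proposal is a plan rather than a proof: the step you yourself identify as ``the technical core''---forcing the average $\EE_{\chi\in\Irr(G)}\theta(\chi)$ to concentrate near $\theta(\chi_0)$---is exactly the part that is missing, and the route you sketch for it does not obviously work. The group behind Theorem~\ref{theorem 1} is $G_n\times L_2(q)^k$, and only the single character $\gamma\times\chi_q^k$ is controlled there; the remaining irreducible characters (for instance $\lambda\times 1$ for linear $\lambda$, with $\theta=1$, and the many products of the degree-$(q\pm1)$ characters of $L_2(q)$) have $\theta$-values scattered through $(\frac13,1]$, and nothing in your sketch shows that a wreath product $H\wr C_n$ makes the ``$\chi_0$-block'' dominant in $\Irr(H\wr C_n)$---Clifford theory for wreath products produces irreducible characters indexed by essentially arbitrary multisets of characters of $H$, so the typical character is built from typical, not exceptional, constituents. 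Your two correct observations (the averaging identity, and that abelian direct factors leave $\theta$ unchanged) do not by themselves close this gap.

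The paper avoids the issue entirely by never routing through Theorem~\ref{theorem 1}. It takes $G=G_l\times H_m^k$ with $G_l$ a dihedral $2$-group and $H_m$ an extraspecial $2$-group, both nilpotent, so that Theorem~\ref{cav} makes $u_I$ multiplicative on such products (Lemma~\ref{Prop Nilp}) and the zero-proportion obeys the recursion of Lemma~\ref{Limit Lemma}. This yields the closed form
\begin{equation*}
\theta(G_l\times H_m^k)=u_I(G_l)\,u_I(H_m)^k+z_I(G_l)+\bigl(1-z_I(G_l)\bigr)z_I(H_m^k),
\end{equation*}
a sequence in $k$ that starts just above $\frac12$ and increases to $1$ in steps of size less than $\epsilon$; no control over individual characters beyond the explicit counts in Lemmas~\ref{Extraspecial Lemma} and~\ref{Dihedral Lemma} is needed. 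If you want to salvage your approach, you would need to replace the vague ``dilution'' step with an actual computation of the full character-table averages, at which point you are effectively forced into something like the paper's nilpotent-product construction.
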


\begin{theo}\label{global 01}
  Let $f\in\{z_I,z_\II, u_I,u_\II,\theta_\II\}$. Then for any
  $L\in[0,1]$ and any $\epsilon>0$, 
  there exists 
  a finite group $G$ such that $|f(G)-L|<\epsilon$.
\end{theo}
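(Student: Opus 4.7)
For $f\in\{z_I,z_\II\}$, this is the main theorem of \cite{Miller2019}, so only the three statistics $u_I$, $u_\II$, $\theta_\II$ require new argument.

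My plan is to lift the local density of Theorem~\ref{theorem 2} to global density. Given $L\in[0,1]$ and $\epsilon>0$, Theorem~\ref{theorem 2} yields an irreducible character $\chi^*$ of some finite group with $|f(\chi^*)-L|<\epsilon/2$, and the task becomes to exhibit a group $G$ with $|f(G)-f(\chi^*)|<\epsilon/2$. Following the pattern typical in problems of this kind, I would arrange for $\chi^*$ to live on a semidirect product $G=A\rtimes K$ with $A$ a large abelian group and $K$ small. By Clifford theory, the bulk of $\Irr(G)$ then consists of characters induced from linear characters of $A$ lying in free $K$-orbits---a large family of Galois conjugates and $K$-twists of $\chi^*$, all sharing its $f$-value. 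A counting argument bounds the contribution of the exceptional characters (those from non-free orbits or from larger inertia groups) and yields $f(G)\to f(\chi^*)$ as $|A|\to\infty$, absorbing the remaining $\epsilon/2$.

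The endpoints are easy: for $G$ abelian, $u_I(G)=u_\II(G)=\theta_\II(G)=1$, handling $L=1$; and for dihedral groups $D_p$ with $p$ a large prime coprime to $6$, a direct computation gives $u_I(D_p)=u_\II(D_p)=4/(p+3)\to 0$ and $\theta_\II(D_p)=2(3p+5)/(p+3)^2\to 0$, handling $L$ near $0$. Both of these also arise as special cases of the semidirect-product framework above (take $K=1$ and $K=C_2$ acting by inversion, respectively).

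\textbf{Main obstacle.} The hard part is the interpolation for intermediate $L$. Direct multiplication by an abelian factor preserves $|\chi(g)|$ on every character (linear characters have modulus $1$), hence preserves $u_I$, $u_\II$, and $\theta_\II$, so product constructions with abelian factors alone cannot move the averaged $f$-value; a genuinely non-abelian construction is needed. The technical core is therefore a character-table count in the semidirect products $A\rtimes K$ showing that the generic Clifford-induced family dominates the average uniformly in $L$, so that the local character $\chi^*$ of Theorem~\ref{theorem 2} can be positioned within a group whose character table as a whole realizes the same statistic. Arranging this uniformly in $L$, and consistently across the three statistics $u_I$, $u_\II$, $\theta_\II$, is where I expect the bulk of the work.
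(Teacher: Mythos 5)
Your reduction of the cases $f\in\{z_I,z_\II\}$ to the main theorem of \cite{Miller2019} is legitimate, and your endpoint computations for abelian groups and for $D_p$ with $p$ coprime to $6$ check out. But for the remaining statistics $u_I$, $u_\II$, $\theta_\II$ at intermediate $L$ you have only a plan, and the plan has a genuine gap at its core. The proposed mechanism --- that in $A\rtimes K$ the characters induced from linear characters of $A$ in free $K$-orbits ``all share the $f$-value of $\chi^*$'' --- is unsubstantiated and is not true in general: such induced characters all vanish off $A$, but on $A$ their values are orbit sums of roots of unity, and whether a given orbit sum is zero, a root of unity, or neither varies from orbit to orbit. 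Moreover, the characters actually produced by Theorem~\ref{theorem 2} for $u_I,u_\II$ are the characters $\chi_q^n$ of $L_2(q)^n$, which do not live on groups of the form $A\rtimes K$ with $A$ abelian and $K$ small, so the framework would require redoing the local construction from scratch. You correctly observe that abelian direct factors cannot move these statistics, but you then conclude that a semidirect-product construction is forced, overlooking the option the paper actually uses: direct products of \emph{non-abelian nilpotent} groups. By Theorem~\ref{cav}, an irreducible character of a nilpotent group takes a root-of-unity value only if it is linear, which yields the multiplicativity $u(G\times H)=u(G)u(H)$ of Lemma~\ref{Prop Nilp}.

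Concretely, the paper takes an extraspecial $2$-group $H_m$ with $u(H_m)=\frac{2^{2m}}{2^{2m}+1}$ close to $1$; then $u(H_m^k)=u(H_m)^k$ descends from near $1$ to $0$ in steps of size less than $\epsilon$, which settles $u_I$ and $u_\II$ with no Clifford theory and no passage through Theorem~\ref{theorem 2}. For $\theta_\II$ it takes powers of a dihedral $2$-group $G_l$ chosen so that both $u_\II(G_l)$ and $z_\II(G_l)$ are small: then $u_\II(G_l^k)=u_\II(G_l)^k\to 0$ while $z_\II(G_l^k)\to 1$ in small steps by Lemma~\ref{Limit Lemma}, so $\theta_\II(G_l^k)=u_\II(G_l^k)+z_\II(G_l^k)$ sweeps $[0,1]$ within $\epsilon$. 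To repair your writeup you would either need to carry out the Clifford-theoretic counting in full (and fix the false uniformity claim about induced characters), or replace the interpolation step by the direct-product argument just described.
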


\section{Preliminaries}
Our constructions make use of extraspecial groups, dihedral groups, projective special linear groups, and the following important consequence of \cite[Theorem~8]{Miller2024}.
\begin{theo}\label{cav}
  If $\chi$ is an irreducible character of a finite nilpotent group $G$ and if $g\in G$, then $\chi(g)$ is a root of unity if and only if $\chi(1)=1$.
\end{theo}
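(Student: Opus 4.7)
The plan is to split into the two directions of the biconditional. The forward implication is immediate: if $\chi(1)=1$ then $\chi$ is a one-dimensional representation of the finite group $G$, so its image lies in the group of $|G|$-th roots of unity, whence $\chi(g)$ is a root of unity for every $g\in G$. All of the work is in the converse, which I would phrase as: if $\chi(1)>1$, then $\chi(g)$ is neither $0$-free and unit-modulus, i.e.\ $\chi$ takes no root-of-unity values.

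To prove the converse I would first reduce to the case of prime power order. Because $G$ is finite nilpotent, it is the internal direct product of its Sylow subgroups $P_1,\dots,P_r$, and consequently every $\chi\in\Irr(G)$ factors as an external tensor product $\chi=\chi_1\boxtimes\cdots\boxtimes\chi_r$ with $\chi_i\in\Irr(P_i)$; writing each $g\in G$ uniquely as $(g_1,\dots,g_r)$ with $g_i\in P_i$, one has the pointwise factorization
\[
  \chi(g)=\prod_{i=1}^r\chi_i(g_i),\qquad \chi(1)=\prod_{i=1}^r\chi_i(1).
\]
Since $\chi(1)>1$, at least one factor $\chi_{i_0}$ is non-linear.

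The substantive input is Theorem~8 of \cite{Miller2024}, whose relevant consequence for the present purpose is that a non-linear irreducible character of a $p$-group takes only values that are either zero or of absolute value strictly greater than $1$. Granting this, the proof finishes with a short bookkeeping argument: the linear factors $\chi_j$ contribute $|\chi_j(g_j)|=1$, the non-linear factors contribute either $0$ or something of modulus $>1$, and so $|\chi(g)|=\prod_i|\chi_i(g_i)|$ is either $0$ or strictly greater than $1$. In either case $\chi(g)$ fails to be a root of unity, which is precisely what we need.

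The main obstacle is hidden inside the cited paper, namely the $p$-group fact that forbids non-linear irreducible characters from ever landing on the unit circle; once this is in hand, the nilpotent case is a routine Sylow-tensor-product argument. I would therefore structure the written proof as: statement of the direct-product decomposition, factorization of $\chi$, selection of a non-linear factor $\chi_{i_0}$, and appeal to \cite[Theorem~8]{Miller2024} to conclude.
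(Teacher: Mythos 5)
The direction ``$\chi(1)=1$ implies $\chi(g)$ is a root of unity'' is fine, and your overall plan (reduce everything to \cite[Theorem~8]{Miller2024}) matches the paper's. The gap is in what you attribute to that theorem. As quoted in the paper, it says: for $\chi\in\Irr(G)$ with $G$ nilpotent and $g\in G$, either $\chi(g)=0$ or $\m(\chi(g))\geq 2^{|\{\text{primes dividing }\chi(1)\}|}$, where $\m(\alpha)$ is the \emph{average} of $|\sigma(\alpha)|^2$ over the Galois group of a cyclotomic field containing $\alpha$. That is a statement about the whole Galois orbit of $\chi(g)$, not about $|\chi(g)|$ itself, and your paraphrase --- that a non-linear irreducible character of a $p$-group takes only values that are zero or of absolute value strictly greater than $1$ --- is false. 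For example, let $G=C_8\wr C_2=(\langle a\rangle\times\langle c\rangle)\rtimes\langle x\rangle$, a $2$-group of order $2^7$ with $x$ swapping $a$ and $c$, and let $\chi=\lambda^G$ where $\lambda$ is the linear character of the base group with $\lambda(a)=1$ and $\lambda(c)=-\zeta_8$, $\zeta_8=e^{2\pi i/8}$. Then $\chi$ is irreducible of degree $2$ and $\chi(a)=\lambda(a)+\lambda(a^x)=1-\zeta_8$, which is nonzero of absolute value $\sqrt{2-\sqrt 2}<1$; its $\m$-value is exactly $2$, consistent with the actual theorem. Consequently your final bookkeeping step, which multiplies lower bounds on the absolute values $|\chi_i(g_i)|$ of the tensor factors, does not go through.

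The repair is essentially the paper's proof: apply \cite[Theorem~8]{Miller2024} to the nilpotent group $G$ directly (no Sylow reduction is needed), conclude that $\chi(g)=0$ or $\m(\chi(g))\geq 2$ whenever $\chi(1)>1$, and then invoke Lemma~\ref{Alg Lemma}, which says that a nonzero algebraic integer $\alpha$ in a cyclotomic field is a root of unity if and only if $\m(\alpha)=1$. If you prefer to keep the reduction to Sylow subgroups, the quantity to track through the tensor product is $\m$, not the absolute value: for values lying in cyclotomic fields of coprime conductors $\m$ is multiplicative, every nonzero algebraic integer has $\m\geq 1$, and the non-linear factor contributes $\m\geq 2$, so again one concludes $\m(\chi(g))\geq 2$ or $\chi(g)=0$ and finishes with Lemma~\ref{Alg Lemma}.
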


For any algebraic integer $\alpha$ that belongs to a cyclotomic field $\QQ(\zeta)$, let
\[\m(\alpha)=\frac{1}{[\QQ(|\alpha|^2):\QQ]}\Tr_{\QQ(|\alpha|^2)/\QQ}(|\alpha|^2)=\frac{1}{|\Gal(\QQ(\zeta)/\QQ)|}\sum_{\sigma\in\Gal(\QQ(\zeta)/\QQ)}|\sigma(\alpha)|^2.\]
The following lemma and proof are well known. Cf.~\cite[p.\ 113]{Cassels}. 

\begin{lemm}\label{Alg Lemma}
  Let $\alpha$ be an algebraic integer that belongs to a cyclotomic field.
  Then the following are equivalent.
  \begin{enumerate}
  \item $\alpha$ is a root of unity.
  \item $|\alpha|=1$.
  \item $\m(\alpha)=1$.
  \end{enumerate}
\end{lemm}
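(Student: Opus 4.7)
The plan is to prove the cyclic implications (1) $\Rightarrow$ (2) $\Rightarrow$ (3) $\Rightarrow$ (1). The first implication is immediate since roots of unity lie on the unit circle. For (2) $\Rightarrow$ (3), I would exploit the fact that $\QQ(\zeta)/\QQ$ is abelian, so complex conjugation (the automorphism $\zeta \mapsto \zeta^{-1}$) commutes with every $\sigma \in \Gal(\QQ(\zeta)/\QQ)$. Hence $|\sigma(\alpha)|^2 = \sigma(\alpha)\sigma(\overline{\alpha}) = \sigma(|\alpha|^2) = \sigma(1) = 1$, and averaging over $\sigma$ gives $\m(\alpha) = 1$.

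The main step will be (3) $\Rightarrow$ (1). I would set $\beta = |\alpha|^2 = \alpha\overline{\alpha}$, which is an algebraic integer in $\QQ(\zeta)$. The same commutation observation shows $\sigma(\beta) = |\sigma(\alpha)|^2 \geq 0$ for every $\sigma$, so the Galois conjugates of $\beta$ over $\QQ$ are all nonnegative real numbers, and by hypothesis their arithmetic mean is $\m(\alpha) = 1$. Their geometric mean equals $|N_{\QQ(\beta)/\QQ}(\beta)|^{1/[\QQ(\beta):\QQ]}$, where the norm is a nonnegative rational integer; it cannot be $0$ (otherwise $\beta = 0$ and the arithmetic mean would vanish), so the geometric mean is at least $1$. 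Then AM--GM forces equality throughout, so every Galois conjugate of $\beta$ equals $1$ and $|\sigma(\alpha)| = 1$ for every $\sigma$. At this point I would invoke Kronecker's theorem: an algebraic integer all of whose conjugates have absolute value $1$ is a root of unity.

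The main obstacle is the AM--GM--plus--Kronecker step in the last implication; the other two implications are essentially formal. If one prefers to avoid quoting Kronecker's theorem, the classical self-contained argument applies directly to $\alpha$: the powers $\alpha, \alpha^2, \alpha^3, \ldots$ all lie in $\ZZ[\zeta]$ with every Galois conjugate on the unit circle, so their minimal polynomials have degrees bounded by $[\QQ(\zeta):\QQ]$ and coefficients bounded by binomial coefficients. Only finitely many such polynomials exist, forcing $\alpha^m = \alpha^n$ for some $m > n$ and hence $\alpha^{m-n} = 1$.
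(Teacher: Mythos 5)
Your proposal is correct and follows essentially the same route as the paper: the abelian Galois group commuting with complex conjugation, AM--GM applied to the conjugates of $|\alpha|^2$ together with the fact that their product is a positive rational integer, and Kronecker's theorem. The only difference is organizational (a cycle of implications rather than two pairwise equivalences), so no further comment is needed.
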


\begin{proof}[Proof of Lemma~\ref{Alg Lemma}]%
Let $\zeta$ be a root of unity such that
$\alpha\in\QQ(\zeta)$. Let  $\mathcal G=\Gal(\QQ(\zeta)/\QQ)$.
Since $\mathcal G$ is abelian and contains the restriction
of complex conjugation, $|\alpha|=1$ if and only if
$|\sigma(\alpha)|^2=1$ for all $\sigma\in\mathcal G$. By 
a theorem of Kronecker, $|\sigma(\alpha)|^2=1$ for all $\sigma\in\mathcal G$
if and only if $\alpha$ is a root of unity.

For the equivalence of the second and third statements,  we may assume that $\alpha\neq 0$, since $\m(\alpha)=0$ if and only if $\alpha=0$.
Then by the inequality of the arithmetic and geometric means, we have 
\[\m(\alpha)\geq \left(
    \textstyle{\prod_{\sigma\in\mathcal G}}|\sigma(\alpha)|^2\right)^{1/|\mathcal G|}\geq 1,\] since the product 
$\prod_{\sigma\in\mathcal G}|\sigma(\alpha)|^2$ is a nonzero algebraic integer in
$\QQ(\zeta)$ that is invariant under $\mathcal G$ and is therefore a positive rational integer. Moreover,
equality holds in both places if and only if $|\sigma(\alpha)|^2=1$ for all $\sigma\in \mathcal G$, which again is equivalent to $|\alpha|=1$.
\end{proof}

\begin{proof}[Proof of Theorem~\ref{cav}]%
Let $\chi$ be an irreducible character of a finite nilpotent group $G$, and let $g\in G$.
Then by \cite[Theorem 8]{Miller2024}, 
\[\chi(g)=0\quad\text{or}\quad \mathfrak m(\chi(g))\geq 2^{|\{\text{primes
      dividing $\chi(1)$}\}|}.\]
Hence $\chi(g)$ is a root of unity if and only if $\chi(1)=1$.
\end{proof}

Our constructions in \S\ref{Proofs} will make use of direct products of certain groups. For the purposes of this paper, 
for any non-negative integer $n$ and  any irreducible character $\chi$ of a finite group $G$, let 
$\chi^n$ be shorthand for the irreducible character $\chi\times\chi\times\ldots\times \chi$
of the direct product $G^n=G\times G\times\ldots\times G$, where $\chi^0$ is the principal character of the trivial group~$G^0$.

\begin{lemm}\label{Prop Nilp}
  Let $u\in\{u_I,u_\II\}$.  If $G$ and $H$ are finite nilpotent groups and
  $\chi\times\psi\in\Irr(G\times H)$, then
$u(G\times H)=u(G)u(H)$ and $u(\chi\times \psi)=u(\chi)u(\psi)$.
\end{lemm}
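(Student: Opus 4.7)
The plan is to exploit the dichotomy provided by Theorem~\ref{cav}: every irreducible character of a finite nilpotent group is either linear, in which case all of its values are roots of unity, or non-linear, in which case none of its values are roots of unity (by Theorem~\ref{cav}). This forces each of $u_I(\chi)$ and $u_\II(\chi)$ to take only the values $0$ or $1$, and the lemma then becomes essentially combinatorial.

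The first step is to make the dichotomy precise. If $\chi\in\Irr(G)$ is linear, then $\chi$ is a homomorphism $G\to\mathbb C^\times$ with image in the roots of unity, so $u_I(\chi)=u_\II(\chi)=1$. If $\chi(1)>1$, then Theorem~\ref{cav} says no value $\chi(g)$ is a root of unity, so $u_I(\chi)=u_\II(\chi)=0$. In either case the common value equals $1$ or $0$ according as $\chi(1)=1$ or $\chi(1)>1$.

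Next I would verify the character-level identity $u(\chi\times\psi)=u(\chi)u(\psi)$. Since $G\times H$ is nilpotent and $(\chi\times\psi)(1,1)=\chi(1)\psi(1)$, the character $\chi\times\psi$ is linear if and only if both $\chi$ and $\psi$ are linear. In that case $(\chi\times\psi)(g,h)=\chi(g)\psi(h)$ is a product of two roots of unity and so is itself a root of unity, giving $u(\chi\times\psi)=1=u(\chi)u(\psi)$. If at least one of $\chi,\psi$ is non-linear, the same criterion makes $\chi\times\psi$ non-linear, so $u(\chi\times\psi)=0$; and by the dichotomy of step one, $u(\chi)u(\psi)=0$ as well.

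For the group-level identity, I would use that $\Irr(G\times H)=\{\chi\times\psi:\chi\in\Irr(G),\,\psi\in\Irr(H)\}$, so
\[u_I(G\times H)=\EE_{\chi\times\psi}u_I(\chi\times\psi)=\EE_{\chi}\EE_{\psi}u_I(\chi)u_I(\psi)=u_I(G)u_I(H),\]
and the analogous computation for $u_\II$, which additionally invokes the identification $\Cl(G\times H)=\Cl(G)\times\Cl(H)$ so that the uniform measure on conjugacy classes of $G\times H$ is the product of the uniform measures on $\Cl(G)$ and $\Cl(H)$. I do not anticipate any real obstacle: once Theorem~\ref{cav} is in hand, the whole lemma reduces to the observation that a product of irreducibles is linear precisely when both factors are, together with standard facts about direct products.
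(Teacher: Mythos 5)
Your proof is correct and follows essentially the same route as the paper: both hinge on Theorem~\ref{cav} to reduce everything to the observation that for nilpotent groups a value is a root of unity exactly when the character is linear. The paper phrases this as a pointwise equivalence ($\chi(g)\psi(h)$ is a root of unity iff both $\chi(g)$ and $\psi(h)$ are), while you package it as the dichotomy $u(\chi)\in\{0,1\}$, but these are the same argument.
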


\begin{proof}[Proof of Lemma~\ref{Prop Nilp}]%
  Let $g\in G$ and $h\in H$. By Theorem~\ref{cav},
  since $\chi\times \psi$ is an irreducible character of a finite nilpotent group,
  the value $\chi(g)\psi(h)$ is a root of unity if and only if the degree 
$\chi(1)\psi(1)$ is equal to $1$, which, by Theorem~\ref{cav}, is equivalent to
$\chi(g)$ and $\psi(h)$ being roots of unity.
\end{proof}

\begin{lemm}\label{Limit Lemma}
  Let $z\in\{z_I,z_\II\}$. Let $G$ be a finite nonabelian group and let
  ${\chi\in\Irr(G)}$ with $\chi(1)>1$. Then
  \begin{enumerate}
  \item $z(G^1),z(G^2),\ldots$ tends to $1$ with steps of size less than $z(G)$,
  \item $z(\chi^1),z(\chi^2),\ldots$ tends to $1$ with steps of size less than $z(\chi)$.
  \end{enumerate}
\end{lemm}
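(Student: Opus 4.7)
The plan is to reduce both assertions to the identity $z(\chi^n) = 1 - (1 - z(\chi))^n$ and its $G^n$-analogue, after which everything follows from Burnside's theorem together with a one-line estimate.

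First, I would invoke the standard identifications $\Irr(G^n) = \Irr(G)^n$ and $\Cl(G^n) = \Cl(G)^n$ via componentwise decomposition, together with $(\chi_1 \times \cdots \times \chi_n)(g_1, \ldots, g_n) = \prod_i \chi_i(g_i)$. Since a product of complex numbers vanishes if and only if some factor does, the fraction of elements of $G^n$ on which $\chi^n$ is nonzero equals $(1 - z_I(\chi))^n$, and the same formula holds at the level of conjugacy classes. This gives $z(\chi^n) = 1 - (1 - z(\chi))^n$ for both $z \in \{z_I, z_\II\}$. Applying the same factorization to pairs $(\psi, g) \in \Irr(G^n) \times G^n$ and $(\psi, g^{G^n}) \in \Irr(G^n) \times \Cl(G^n)$ yields $z(G^n) = 1 - (1 - z(G))^n$ for both choices of $z$.

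Second, I would verify the strict bounds $0 < z(\chi) < 1$ and $0 < z(G) < 1$. Since $\chi(1) > 1$, the character $\chi$ is nonlinear, so by Burnside's theorem it has a vanishing element and hence a vanishing class, giving $z_I(\chi), z_\II(\chi) > 0$. The positivity of $z_I(G)$ and $z_\II(G)$ follows from the same argument applied to any one nonlinear irreducible character of the nonabelian group $G$, since $z(G)$ is averaged over $\Irr(G)$. The upper bounds are immediate from $\chi(1) \neq 0$.

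Finally, the step size is $(1 - z)^n - (1 - z)^{n+1} = z(1 - z)^n$, which is strictly less than $z$ for every $n \geq 1$ whenever $0 < z < 1$, and $(1 - z)^n \to 0$. Setting $z = z(\chi)$ and $z = z(G)$ yields both parts. The only substantive input is Burnside's theorem to guarantee positivity; the remainder is a routine calculation, so I do not anticipate any real obstacle.
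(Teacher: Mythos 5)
Your proposal is correct and follows essentially the same route as the paper: the paper states the recurrence $z(x^{k+1})=z(x^k)+(1-z(x^k))z(x)$, which is exactly your closed form $z(x^n)=1-(1-z(x))^n$ in unsolved form, and likewise invokes Burnside's theorem to get $z(x)>0$ and hence limit $1$. The only cosmetic difference is that the paper concludes convergence by monotonicity and a fixed-point equation rather than from the explicit formula.
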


\begin{proof}[Proof of Lemma~\ref{Limit Lemma}]%
  Let $x\in\{G,\chi\}$. Then
  \begin{equation}\label{z step}
    z(x^{k+1})=z(x^k)+(1-z(x^k))z(x),\quad k=1,2,\ldots,
  \end{equation}
  so the sequence $z(x^k)$ ($k=1,2,\ldots$) is increasing, bounded, and thus
  convergent with limit $\ell$ satisfying $\ell=\ell+(1-\ell)z(x)$. By a theorem of Burnside,
  $z(x)\neq 0$, so $\ell=1$. By \eqref{z step}, $|z(x^{k+1})-z(x^k)|<z(x)$ for $k=1,2,\ldots.$
\end{proof}

For each prime power $q$, let
$\chi_q=\pi_q-1$, where 
$\pi_q$ is the permutation character
of the group $L_2(q)={\rm PSL}_2(q)$
acting faithfully in the natural way on the points of the projective line
over~$\mathbb F_q$.  
Recall that if $q$ is even, then 
$L_2(q)$ has exactly $q^3-q$ elements and $q+1$ conjugacy classes.

\begin{lemm}\label{L2q Lemma}
  Let $q$ be a prime power. Then
  $\chi_q$ is the unique irreducible character of $L_2(q)$ such that
  $\chi_q(1)=q$, and it has the following properties.
  \begin{enumerate}
  \item\label{L2qz}  $\chi_q$ vanishes on exactly $\frac{2}{\gcd(2,q)}$ conjugacy classes, each of size $\frac{|L_2(q)|}{q}$. 
  \item\label{L2qu} If $\chi_q(g)\neq 0$ and $g\neq 1$, then $\chi_q(g)=\pm1$.
  \end{enumerate}
\end{lemm}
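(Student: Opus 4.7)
My plan is to derive both statements from the natural action of $L_2(q)$ on $\mathbb{P}^1(\mathbb{F}_q)$ together with the classical character theory of $L_2(q)$. The starting point is that this action is 2-transitive on the $q+1$ projective points, so the permutation character decomposes as $\pi_q = 1 + \chi_q$ with $\chi_q$ irreducible of degree $q$. Uniqueness of $\chi_q$ among irreducibles of degree $q$ will come from the well-known list of irreducible degrees of $L_2(q)$---namely $1$, $q$, $q\pm 1$, plus a pair of degrees $(q\pm 1)/2$ when $q$ is odd---in which $q$ appears exactly once (as the Steinberg character).

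For (2), I would combine the formula $\chi_q(g) = |\mathrm{Fix}(g)| - 1$ with the fact that $\mathrm{PGL}_2(q)\supseteq L_2(q)$ acts sharply 3-transitively on $\mathbb{P}^1(\mathbb{F}_q)$. This forces $|\mathrm{Fix}(g)|\in\{0,1,2\}$ for every non-identity $g$, so $\chi_q(g)\in\{-1,0,1\}$, which is precisely (2).

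For (1), the same formula shows that $\chi_q(g)=0$ with $g\neq 1$ iff $g$ fixes exactly one projective point, which happens iff a matrix representative of $g$ in $\mathrm{SL}_2(q)$ is a non-identity unipotent. I would then count unipotent classes in $L_2(q)$: the corresponding count in $\mathrm{SL}_2(q)$ is $1$ when $q$ is even (since squaring is a bijection on $\mathbb{F}_q^*$) and $2$ when $q$ is odd (separated by whether the superdiagonal entry is a square), and these classes descend bijectively to $L_2(q)$ because $-u$ has order $2p$ and so is not unipotent. This yields $2/\gcd(2,q)$ vanishing classes. Since a non-identity unipotent $u\in\mathrm{SL}_2(q)$ has centralizer $\langle u,-I\rangle$ of order $\gcd(2,q)\cdot q$, its image in $L_2(q)$ has centralizer of order $q$, producing class size $|L_2(q)|/q$.

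The only genuine piece of bookkeeping I anticipate is the non-fusion of the two unipotent classes in the passage from $\mathrm{SL}_2(q)$ to $L_2(q)$ when $q$ is odd; everything else is an immediate consequence of sharp 3-transitivity of the projective action combined with the standard structure of $\mathrm{SL}_2$.
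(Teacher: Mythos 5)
Your argument is correct in substance, but it takes a genuinely different route from the paper: the paper simply cites Huppert--Blackburn (Chapter XI, Lemma 5.2 and Theorems 5.5--5.7) and Jordan's character table of $L_2(q)$, whereas you derive everything directly from the projective action. What your approach buys is self-containedness: irreducibility of $\pi_q-1$ from $2$-transitivity, the bound $\chi_q(g)\in\{-1,0,1\}$ for $g\neq 1$ from sharp $3$-transitivity of ${\rm PGL}_2(q)$, and the vanishing classes from the classification of non-identity unipotent classes in ${\rm SL}_2(q)$ and their non-fusion in the quotient (correctly justified by the order-$2p$ observation for $-u$). The only input you still take from the literature is the degree list of $\Irr(L_2(q))$, needed for the uniqueness claim; the paper's citation covers that too, so neither route avoids the classical theory entirely, but yours isolates exactly which facts are used.

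One detail needs repair: the centralizer of a non-identity unipotent $u$ in ${\rm SL}_2(q)$ is not $\langle u,-I\rangle$ in general (for $q=p^f$ with $f>1$ that subgroup is too small), and its order is $q\cdot\gcd(2,q-1)$, not $q\cdot\gcd(2,q)$ --- your formula has the parity reversed, giving $q$ for odd $q$ and $2q$ for even $q$ instead of the other way around. The correct centralizer is $\{\pm I\}\cdot U$ where $U$ is the full Sylow $p$-subgroup containing $u$. This does not affect your conclusion, since in either parity the image in $L_2(q)$ has centralizer of order exactly $q$ and hence class size $|L_2(q)|/q$; alternatively, one can bypass centralizers by counting the $q^2-1$ non-identity unipotent elements directly and dividing by the number of classes.
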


\begin{proof}[Proof of Lemma~\ref{L2q Lemma}]%
  This follows from \cite[Chapter XI, Lemma~5.2 and Theorems 5.5--5.7]{HuppertBlackburn}
  and the character table of $L_2(q)$ \cite[pp.~402--403]{Jordan}.
\end{proof}

For each positive integer $n$, let $H_n$ denote a fixed extraspecial group
of order $2^{2n+1}$.
In general, for any positive integer $n$ and prime $p$, if
$E$ is an extraspecial group
of order $p^{2n+1}$, then
the center $Z(E)$ is cyclic of order $p$ 
and the irreducible characters of $E$ are well known.

\begin{lemm}\label{Extraspecial Characters}
  Let $E$ be an extraspecial group of order $p^{2n+1}$ for some prime $p$ and positive integer $n$.
  Then  $E$ has exactly $p^{2n}+p-1$ irreducible characters. 
  Exactly  $p-1$ irreducible characters of $E$ have degree greater than~$1$,
  and they are 
  $\frac{1}{p^n}\lambda^{E}$ for $\lambda\in\Irr(Z(E))\smallsetminus\{1\}$.
  In particular,
  for each $\chi\in\Irr(E)$ with $\chi(1)>1$, the zeros of $\chi$ are the elements $g\in E\smallsetminus Z(E)$. 
\end{lemm}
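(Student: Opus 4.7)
The plan is to combine the standard commutator structure of $E$ with Clifford theory and induced characters from $Z(E)$. First I would record the well-known structural facts for an extraspecial $p$-group of order $p^{2n+1}$: $Z(E)$ is cyclic of order~$p$, the commutator subgroup $E'$ equals $Z(E)$, and $E/Z(E)$ is elementary abelian of order $p^{2n}$. Counting linear characters via $E/E'$ immediately gives $p^{2n}$ characters of degree~$1$, and the remaining identity
\[\sum_{\chi\in\Irr(E),\,\chi(1)>1}\chi(1)^2=|E|-|E/E'|=p^{2n}(p-1)\]
will be the global constraint that pins down the nonlinear part.

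Next I would analyze the nonlinear irreducibles. For $\chi\in\Irr(E)$ with $\chi(1)>1$, the identification $E'=Z(E)$ forces $Z(E)\not\subseteq\ker\chi$, so Schur's lemma applied to $\chi|_{Z(E)}$ yields $\chi|_{Z(E)}=\chi(1)\lambda$ for a nontrivial $\lambda\in\Irr(Z(E))$, and the inclusion $Z(E)\subseteq Z(\chi)$ combined with $\chi(1)^2\leq[E:Z(\chi)]$ gives $\chi(1)\leq p^n$. By Frobenius reciprocity, $\chi$ occurs in the induced character $\lambda^E$, and since $Z(E)$ is normal, Clifford theory shows the constituents of $\lambda^E$ are precisely those irreducibles of $E$ lying over $\lambda$, each with multiplicity equal to its degree. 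Thus $\langle\lambda^E,\lambda^E\rangle=[E:Z(E)]=p^{2n}$, which I will also re-derive directly from the induced-character formula.

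The crucial step is producing an irreducible of degree exactly $p^n$ lying over each nontrivial $\lambda$. For this I would exploit the symplectic form on the $\mathbb F_p$-vector space $E/Z(E)$ coming from the commutator; a maximal totally isotropic subspace has dimension $n$, and its preimage is a maximal abelian subgroup $A\leq E$ with $|A|=p^{n+1}$ that is normal in $E$ and contains $Z(E)$. Extending $\lambda$ to a linear character $\mu$ of $A$ and applying Mackey's irreducibility criterion (using that $A=C_E(A)$ and that $\lambda$ is faithful on $Z(E)$) shows $\mu^E$ is irreducible of degree $p^n$. Verifying this irreducibility criterion is the main technical obstacle, though it reduces to the statement that for $x\notin A$ some $a\in A$ has $[a,x]\neq 1$, which is immediate from $A=C_E(A)$.

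Having found one irreducible of degree $p^n$ over each of the $p-1$ nontrivial $\lambda$, the degree sum $\sum\chi(1)^2=p^{2n}(p-1)$ is already saturated, so there are exactly $p-1$ nonlinear irreducibles, each of degree $p^n$, one per nontrivial $\lambda$; the total count is therefore $p^{2n}+p-1$. Combined with the multiplicity-equals-degree identity, this forces $\lambda^E=p^n\chi$, i.e.\ $\chi=\frac{1}{p^n}\lambda^E$. Finally, since $Z(E)$ is normal, the induced-character formula shows $\lambda^E$ vanishes off $Z(E)$ and equals $p^{2n}\lambda(g)$ on $g\in Z(E)$, so $\chi(g)=p^n\lambda(g)\neq 0$ for $g\in Z(E)$ and $\chi(g)=0$ for $g\in E\smallsetminus Z(E)$, establishing the description of the zero set.
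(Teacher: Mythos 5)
Your proposal is correct and complete. The paper itself offers no argument here: its ``proof'' is a one-line citation to Huppert, \emph{Endliche Gruppen I}, Kapitel~V, Satz~16.14. What you have written is essentially the standard textbook derivation of that Satz, and every step checks out: the count of $p^{2n}$ linear characters from $E/E'=E/Z(E)$; the observation that a nonlinear $\chi$ restricts to $\chi(1)\lambda$ on $Z(E)$ with $\lambda$ necessarily faithful (since $|Z(E)|=p$), giving $\chi(1)\le p^n$ via $\chi(1)^2\le[E:Z(\chi)]$; the computation $\langle\lambda^E,\lambda^E\rangle=p^{2n}$ from centrality of $Z(E)$; the construction of a degree-$p^n$ irreducible over each nontrivial $\lambda$ by inducing an extension of $\lambda$ from the preimage $A$ of a maximal totally isotropic subspace of the symplectic space $E/Z(E)$, with Mackey irreducibility supplied by $A=C_E(A)$ and faithfulness of $\lambda$; and the saturation of $\sum_{\chi(1)>1}\chi(1)^2=p^{2n}(p-1)$, which pins down the nonlinear characters as exactly these $p-1$, forces $\lambda^E=p^n\chi$, and yields the vanishing statement because an induced character from a central subgroup is supported on that subgroup. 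The only thing your write-up buys beyond the paper is self-containment; conversely, the citation route is shorter and is clearly what the author intended.
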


\begin{proof}[Proof of Lemma~\ref{Extraspecial Characters}]%
See \cite[Kapitel~V, Satz 16.14]{Huppert}.
\end{proof}

\begin{lemm}\label{Extraspecial Lemma}
If $E$ is an extraspecial group of order $p^{2n+1}$ for some prime $p$ and positive integer $n$, then
\begin{alignat}{2}
  u_I(E)&=\frac{p^{2n}}{p^{2n}+p-1},& z_I(E)&=\frac{(p-1)(p^{2n+1}-p)}{(p^{2n}+p-1)p^{2n+1}},\\
  u_\II(E)&=\frac{p^{2n}}{p^{2n}+p-1},&\qquad z_\II(E)&=\frac{(p-1)(p^{2n}-1)}{(p^{2n}+p-1)^2}.
\end{alignat}
\end{lemm}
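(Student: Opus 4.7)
The plan is to exploit the complete description of $\Irr(E)$ given in Lemma~\ref{Extraspecial Characters}: there are $p^{2n}$ linear characters and $p-1$ nonlinear characters, each of degree $p^n$. For each linear character $\chi$, every value $\chi(g)$ is a root of unity, so $u_I(\chi)=u_\II(\chi)=1$ and $z_I(\chi)=z_\II(\chi)=0$. For each nonlinear $\chi$, Lemma~\ref{Extraspecial Characters} says the zeros are exactly the elements of $E\smallsetminus Z(E)$; and since $Z(E)$ is central, Schur's lemma forces $\chi(z)=\chi(1)\omega(z)$ for $z\in Z(E)$ and some linear $\omega$ on $Z(E)$, so $|\chi(z)|=\chi(1)=p^n>1$ on $Z(E)$. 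Hence for nonlinear $\chi$ the value $\chi(g)$ is never a root of unity and $u_I(\chi)=u_\II(\chi)=0$.

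Next I would count the elements and conjugacy classes involved. One has $|Z(E)|=p$ and so $|E\smallsetminus Z(E)|=p^{2n+1}-p$, giving $z_I(\chi)=(p^{2n+1}-p)/p^{2n+1}$ for each nonlinear $\chi$. For conjugacy classes, the $p$ central elements each form singleton classes, while for any noncentral $g\in E$ the commutator subgroup $[E,E]=Z(E)$ forces the class $g^E\subseteq gZ(E)$ to have size exactly $p$ (it cannot be $1$ since $g\notin Z(E)$). Therefore there are $p+(p^{2n+1}-p)/p=p^{2n}+p-1$ conjugacy classes in total (matching $|\Irr(E)|$, as it must), of which $p^{2n}-1$ are noncentral. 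Thus $z_\II(\chi)=(p^{2n}-1)/(p^{2n}+p-1)$ for each nonlinear $\chi$.

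Finally, I would average these four quantities over the $p^{2n}+p-1$ irreducible characters. The linear characters contribute nothing to the $z$-statistics and contribute $p^{2n}$ to the numerators of the $u$-statistics; the $p-1$ nonlinear characters contribute the values just computed. A direct calculation then yields the claimed formulas
\[u_I(E)=u_\II(E)=\frac{p^{2n}}{p^{2n}+p-1},\qquad z_I(E)=\frac{(p-1)(p^{2n+1}-p)}{(p^{2n}+p-1)\,p^{2n+1}},\]
together with $z_\II(E)=(p-1)(p^{2n}-1)/(p^{2n}+p-1)^2$. There is no real obstacle here; the only point requiring care is the elementary verification that noncentral conjugacy classes have size $p$, which in turn rests on the standard fact that $[E,E]=Z(E)$ in an extraspecial group.
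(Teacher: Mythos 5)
Your proof is correct and follows essentially the same route as the paper, whose proof consists of the single line ``This follows from Lemma~\ref{Extraspecial Characters}''; you have simply written out the counting that this citation leaves implicit (the $p^{2n}$ linear and $p-1$ nonlinear characters, the zero set $E\smallsetminus Z(E)$, the class sizes $1$ and $p$, and the resulting averages), and all of these details check out.
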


\begin{proof}[Proof of Lemma~\ref{Extraspecial Lemma}]%
This follows from Lemma~\ref{Extraspecial Characters}. See also \cite{Miller2019}.
\end{proof}

For each positive integer $n$, let $G_n=\langle s,t\mid s^2=t^{2^n}=(st)^2=1\rangle$, so $G_n$ is the nilpotent dihedral group of order $2^{n+1}$ with
$2^{n-1}+3$ conjugacy classes. If $n=1$, then $G_n$ is the Klein 4-group.
If $n\geq 2$, then  $1$ and $t^{2^{n-1}}$ make up the center,
each noncentral element of $G_n$ lying in $\langle t\rangle$ is conjugate only to itself and its distinct inverse,
and the complement of $\langle t\rangle$ breaks into 2 classes of size $2^{n-1}$.
Recall
\cite{Huppert}
that $G_n$ has exactly 4 characters $\lambda_i$ of degree~1 and $2^{n-1}-1$ irreducible characters $\eta_1,\eta_2,\ldots,\eta_{2^{n-1}-1}$ of degree greater than $1$ given~by
\begin{equation}\label{Dihedral Characters}
  \eta_h(t^k)=2\cos\frac{\pi hk}{2^{n-1}},\qquad \eta_h(st^k)=0.
\end{equation}
Let $\nu_p(n)$ denote the $p$-adic valuation of an integer $n$.

\begin{lemm}\label{Dihedral Vanishing Lemma}
  Let $n$ and $h$ be positive integers such that 
  ${0<h<2^{n-1}}$, and 
  let $\eta_h$ be the irreducible character of $G_n$ given by \eqref{Dihedral Characters}. 
  \begin{enumerate}
  \item\label{D elt} $\eta_h(g)=0$ for exactly $2^{\nu_2(h)+1}+2^n$ elements $g\in G_n$.
  \item\label{D class} $\eta_h$ vanishes on exactly $2^{\nu_2(h)}+2$ conjugacy classes of $G_n$.
  \item\label{D First} The number of pairs $(\chi,g)\in\Irr(G_n)\times G_n$ that satisfy $\chi(g)=0$ is equal to $2^{n-1}(n-1)+2^{2n-1}-2^n$.
  \item\label{D Second} The number of zeros in the character table of $G_n$ is equal to \[2^{n-2}(n-1)+2^n-2.\] 
  \end{enumerate}
\end{lemm}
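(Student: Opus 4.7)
The plan is to exploit the character formula \eqref{Dihedral Characters} together with the conjugacy class description given immediately before the lemma. Parts \eqref{D elt} and \eqref{D class} reduce to counting the solutions of a single $2$-adic congruence, and parts \eqref{D First} and \eqref{D Second} follow by summing those counts over $h$.

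For \eqref{D elt}, the formula $\eta_h(st^k)=0$ contributes $2^n$ zeros on the coset $s\langle t\rangle$. On $\langle t\rangle$ I need $\cos(\pi hk/2^{n-1})=0$, equivalently
\[2hk\equiv 2^{n-1}\pmod{2^n}.\]
Writing $h=2^{\nu_2(h)}h'$ with $h'$ odd and noting that the hypothesis $0<h<2^{n-1}$ forces $\nu_2(h)\leq n-2$, we have $\gcd(2h,2^n)=2^{\nu_2(h)+1}$, which divides $2^{n-1}$. Hence the congruence has exactly $2^{\nu_2(h)+1}$ solutions $k\in\ZZ/2^n\ZZ$, so the total number of element-zeros is $2^{\nu_2(h)+1}+2^n$.

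For \eqref{D class}, the coset $s\langle t\rangle$ contributes $2$ vanishing classes. Inside $\langle t\rangle$ the only elements fixed by $k\mapsto -k$ are $1$ and $t^{2^{n-1}}$, on which $\eta_h$ takes the nonzero values $2$ and $2\cos(\pi h)=\pm 2$, respectively; thus every zero in $\langle t\rangle$ lies in a noncentral class of size $2$, and the $2^{\nu_2(h)+1}$ element-zeros pair off into $2^{\nu_2(h)}$ classes, giving a total of $2^{\nu_2(h)}+2$. The case $n=1$ is vacuous since no $h$ satisfies $0<h<1$.

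For \eqref{D First} and \eqref{D Second}, only the characters $\eta_h$ with $1\leq h\leq 2^{n-1}-1$ vanish anywhere, so I sum the expressions from \eqref{D elt} and \eqref{D class} over this range. The only nonroutine ingredient is the identity
\[\sum_{h=1}^{2^m-1}2^{\nu_2(h)}=m\cdot 2^{m-1},\]
which I would prove by grouping: for each $v\in\{0,1,\ldots,m-1\}$ there are exactly $2^{m-v-1}$ integers $h\in\{1,\ldots,2^m-1\}$ with $\nu_2(h)=v$, each contributing $2^v$, so each $v$ contributes $2^{m-1}$ to the sum. Setting $m=n-1$ and substituting yields $(n-1)2^{n-1}+2^{2n-1}-2^n$ for \eqref{D First} and $(n-1)2^{n-2}+2^n-2$ for \eqref{D Second} after trivial algebra. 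The main (minor) obstacle is keeping track of the $2$-adic valuations in the congruence count and in the subsequent sum; once the identity above is in hand, everything else is bookkeeping.
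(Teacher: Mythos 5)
Your proposal is correct and follows essentially the same route as the paper: both reduce part \eqref{D elt} to counting solutions of $2hk\equiv 2^{n-1}\pmod{2^n}$ (you via the gcd criterion, the paper by listing the solutions explicitly), both deduce part \eqref{D class} from the fact that the central elements $1$ and $t^{2^{n-1}}$ are not zeros so the zeros in $\langle t\rangle$ pair into classes of size $2$, and both obtain parts \eqref{D First} and \eqref{D Second} by summing over $h$ using the count of $h$ with a given $2$-adic valuation. The differences are purely presentational.
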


\begin{proof}[Proof of Lemma~\ref{Dihedral Vanishing Lemma}]%
  The integers $k$ that satisfy  
  $\eta_h(t^k)=\cos\frac{\pi hk}{2^{n-1}}=0$ and $0\leq k<2^n$ 
  are the solutions of 
  $2hk=2^{n-1}\bmod 2^n$ with $0\leq k<2^n$, which
  are
  \begin{equation}
    2^{n-\nu_2(h)-2}(1+2j),\quad 0\leq j< 2^{\nu_2(h)+1}.
  \end{equation}
  Hence $\eta_h$ has exactly $2^{\nu_2(h)+1}$ zeros in
  $\langle t\rangle$. Since 
  all elements of $s\langle t\rangle$
  are also zeros of $\eta_h$, we get a total of $2^{\nu_2(h)+1}+2^n$ zeros.
  For the second part, since $t^{2^{n-1}}$ is not a zero of $\eta_h$,
  each zero of $\eta_h$ that lies in $\langle t\rangle$
  belongs to a class of size~$2$, while the
  zeros of $\eta_h$ in $s\langle t\rangle$ split into two  classes.
  For the third part, the number of integers $0<\ell<2^{n-1}$ such
  that $\nu_2(\ell)=k$ ($0\leq k\leq n-2$) is $2^{n-k-2}$,
  so by the first part, the number of pairs $(\chi,g)\in \Irr(G_n)\times G_n$ satisfying $\chi(g)=0$ is 
  $\sum_{k=0}^{n-2} 2^{n-k-2}(2^{k+1}+2^n)$. 
  Lastly, by using the second part, the number of pairs
  $(\chi,g^{G_n})\in\Irr(G_n)\times \Cl(G_n)$ satisfying $\chi(g)=0$ is 
  $\sum_{k=0}^{n-2} 2^{n-k-2}(2^k+2)$.
\end{proof}

\begin{lemm}\label{Dihedral Lemma}
For any positive integer $n$, 
 \begin{alignat}{2}
 u_I(G_n)&=\frac{4}{2^{n-1}+3},& z_I(G_n)&=\frac{1}{2}\frac{2^n+n-3}{2^n+6},\\
 u_\II(G_n)&=\frac{4}{2^{n-1}+3},&\qquad z_\II(G_n)&=\frac{2^{n-2}(n+3)-2}{(2^{n-1}+3)^2}.
 \end{alignat}
\end{lemm}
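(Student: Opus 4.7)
The plan is to reduce everything to Theorem~\ref{cav} and to parts (\ref{D First}) and (\ref{D Second}) of Lemma~\ref{Dihedral Vanishing Lemma}, and then to perform elementary simplification.

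For the $u$-statistics, since $G_n$ is a nilpotent $2$-group, Theorem~\ref{cav} tells me that $\chi(g)$ is a root of unity precisely when $\chi$ has degree~$1$. The group $G_n$ has exactly $4$ linear characters, so the number of pairs $(\chi,g)\in\Irr(G_n)\times G_n$ with $|\chi(g)|=1$ is $4|G_n|$, and the number of pairs $(\chi,g^{G_n})\in\Irr(G_n)\times\Cl(G_n)$ with $|\chi(g)|=1$ is $4|\Cl(G_n)|$. Dividing by $|\Irr(G_n)|\,|G_n|$ and $|\Irr(G_n)|\,|\Cl(G_n)|$, respectively, and using $|\Irr(G_n)|=|\Cl(G_n)|=2^{n-1}+3$, gives $u_I(G_n)=u_\II(G_n)=4/(2^{n-1}+3)$.

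For the $z$-statistics, I would divide the counts supplied by parts (\ref{D First}) and (\ref{D Second}) of Lemma~\ref{Dihedral Vanishing Lemma} by $|\Irr(G_n)|\,|G_n|=(2^{n-1}+3)\cdot 2^{n+1}$ and $|\Irr(G_n)|\,|\Cl(G_n)|=(2^{n-1}+3)^2$, respectively. The only computation is algebraic: factoring
\[2^{n-1}(n-1)+2^{2n-1}-2^n=2^{n-1}(2^n+n-3)\]
and cancelling a factor of $2^n$ against the first denominator yields the stated formula for $z_I(G_n)$; rewriting
\[2^{n-2}(n-1)+2^n-2=2^{n-2}(n+3)-2\]
yields the stated formula for $z_\II(G_n)$.

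I do not expect a genuine obstacle; the lemma is a bookkeeping consequence of Theorem~\ref{cav} and Lemma~\ref{Dihedral Vanishing Lemma}. The one thing worth checking is that the case $n=1$ (the Klein $4$-group, for which $G_n$ is abelian and has no characters of degree greater than $1$) is consistent with the stated closed forms; this holds because the two relevant sums in Lemma~\ref{Dihedral Vanishing Lemma} are empty when $n=1$, correctly producing $z_I(G_1)=z_\II(G_1)=0$ and $u_I(G_1)=u_\II(G_1)=1$.
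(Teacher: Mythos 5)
Your proposal is correct and matches the paper's proof, which likewise cites Theorem~\ref{cav}, Lemma~\ref{Dihedral Vanishing Lemma}, and the fact that $G_n$ has exactly $4$ linear characters; you have simply written out the routine algebra (note only that the common factor you cancel in the $z_I$ computation is $2^{n-1}$ rather than $2^n$, which does not affect the result).
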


\begin{proof}[Proof of Lemma~\ref{Dihedral Lemma}]%
This follows from Theorem~\ref{cav}, Lemma~\ref{Dihedral Vanishing Lemma}, and the fact that $G_n$ has exactly 4 characters of degree 1. 
\end{proof}

\section{Proofs of Theorems 1--4}\label{Proofs}
\begin{proof}[Proof of Theorem~\ref{theorem 1}]%
Let $\epsilon>0$.
Let $n\geq 2$ be an integer such that ${2^n>\frac{1}{\epsilon}}$.
Let $\gamma$ be the irreducible character $\eta_1$ of the dihedral group $G_n$ given by \eqref{Dihedral Characters} with $h=1$.
By Lemma~\ref{Dihedral Vanishing Lemma}\eqref{D elt}, $\gamma$ has 
exactly $2^n+2$ zeros, so 
\[z_I(\gamma)=\frac{1}{2}+\frac{1}{2^n}<\frac{1}{2}+\epsilon.\]
By Theorem~\ref{cav}, since $\gamma(1)>1$ and $G_n$ is nilpotent, $u_I(\gamma)=0$.

Let $q=2^r$ for some positive integer $r$ such that $q>\frac{1}{\epsilon}$. By Lemma~\ref{L2q Lemma}\eqref{L2qz}, $z_I(\chi_q)=\frac{1}{q}<\epsilon$.
By Lemma~\ref{L2q Lemma}\eqref{L2qu} and the fact that $u_I(\gamma)=0$, for each non-negative integer $k$, we have that 
$u_I(\gamma\times\chi_q^k)=0$ and hence 
\[\theta(\gamma\times\chi_q^{k+1})=z_I(\gamma\times\chi_q^{k+1})
  =z_I(\gamma\times \chi_q^k)+(1-z_I(\gamma\times \chi_q^k))z_I(\chi_q).\]
It follows that the sequence $\theta(\gamma\times \chi_q^k)$ is increasing, bounded, and therefore convergent with limit $\ell$ satisfying $\ell=\ell+(1-\ell)z_I(\chi_q)$, which implies $\ell=1$. 
Since $z_I(\gamma)<\frac{1}{2}+\epsilon$ and $z_I(\chi_q)<\epsilon$,
the sequence $\theta(\gamma\times\chi_q^k)$  ($k=0,1,2,\dots$) 
starts out less than $\frac{1}{2}+\epsilon$ and tends 
to $1$ with steps of size less than $\epsilon$. Hence, for any $L\in[\frac{1}{2},1]$, there exists a $k$ such that 
$|\theta(\gamma\times\chi_q^k)-L|<\epsilon$. 
\end{proof}

\begin{proof}[Proof of Theorem~\ref{global 1/2}]%
Let $\epsilon>0$.
By Lemma~\ref{Dihedral Lemma}, the nilpotent dihedral groups $G_n$ satisfy
\[u_I(G_n)=\frac{4}{2^{n-1}+3},\qquad z_I(G_n)=\frac{1}{2}\frac{2^n+n-3}{2^n+6}.\]
Let $l$ be a positive integer such that
\begin{equation}\label{GN start}
  u_I(G_l)<\frac{\epsilon}{2},\qquad \frac{1}{2}< z_I(G_l)<\frac{1}{2}+\frac{\epsilon}{2}.\end{equation}
By Lemma~\ref{Extraspecial Lemma}, the extraspecial groups $H_n$ of order $2^{2n+1}$ satisfy 
\[u_I(H_n)=\frac{2^{2n}}{2^{2n}+1} ,\qquad z_I(H_n)=\frac{2^{2n+1}-2}{2^{4n+1}+2^{2n+1}}.\]
Let $m$ be a positive integer such that $z_I(H_m)<\epsilon$.

By Lemma~\ref{Prop Nilp}, for $k=0,1,2,\ldots,$
we have $u_I(G_l\times H_m^k)=u_I(G_l)u_I(H_m)^k$ 
and therefore 
\begin{equation}\label{Master}
  \theta(G_l\times H_m^k)
  =u_I(G_l)u_I(H_m)^k+z_I(G_l)+(1-z_I(G_l))z_I(H_m^k).
\end{equation}
By Lemma~\ref{Limit Lemma}, the sequence
$z_I(H_m^k)$ ($k=0,1,2,\ldots$) tends to $1$ with
steps of size less than $\epsilon$.
Since $0<u_I(H_m)<1$, the sequence $u_I(H_m)^k$ tends to~$0$.
It follows by \eqref{GN start} and \eqref{Master} that the sequence $\theta(G_l\times H_m^k)$ ($k=0,1,2,\ldots$) starts out less than $\frac{1}{2}+\frac{\epsilon}{2}$  and tends to $1$
with steps of size less than $\epsilon$.
Hence, for any $L\in[\frac{1}{2},1]$, there exists a $k$ such that 
${|\theta(G_l\times H_m^k)-L|<\epsilon}$.
\end{proof}

\begin{proof}[Proof of Theorem~\ref{theorem 2}]%
Let $\epsilon>0$.
Let $z\in\{z_I,z_\II\}$.
Put $q=2^r$ for some positive integer $r$ such that $q>\frac{1}{\epsilon}$. 
By Lemma~\ref{L2q Lemma}\eqref{L2qz}, we have $z_I(\chi_q)=\frac{1}{q}$ and $z_\II(\chi_q)=\frac{1}{q+1}$, so $z(\chi_q)<\epsilon$. 
Therefore, by Lemma~\ref{Limit Lemma}, the
sequence $z(\chi^n_q)$ ($n=1,2,\ldots$) starts out less than $\epsilon$ and tends
to $1$ with steps of size
less than $\epsilon$. Hence, for any $L\in [0,1]$, there exists an $n$ such that $|z(\chi^n_q)-L|<\epsilon$.

Let $u\in\{u_I,u_\II\}$.
Put $q=2^s$ for some positive integer $s$ such that $q>\frac{2}{\epsilon}$. 
By Lemma~\ref{L2q Lemma}, for each positive integer $n$, 
$u(\chi_q^n)=u(\chi_q)^n=(1-\Delta_q)^n$, 
where $\Delta_q=\frac{1}{q}+\frac{1}{|L_2(q)|}<\frac{2}{q}$ if $u=u_I$, and
$\Delta_q=\frac{2}{q+1}$ if $u=u_\II$.  So 
$0<\Delta_q<\epsilon$. 
The sequence $u(\chi_q^n)$ ($n=1,2,\ldots$) therefore starts out greater than $1-\epsilon$ and 
tends monotonically to $0$ with steps of size less than $\epsilon$. Hence, for any $L\in[0,1]$, there exists an $n$ such that 
$|u(\chi_q^n)-L|<\epsilon$.

For $\theta_\II$, choose an integer $n\geq 2$ such that $2^{n-1}>\frac{3}{\epsilon}$.
Let $\gamma$ be the irreducible character $\eta_1$ of the dihedral group $G_n$ given by \eqref{Dihedral Characters} with $h=1$.
By Theorem~\ref{cav}, $u_\II(\gamma)=0$, so $\theta_\II(\gamma^k)=z_\II(\gamma^k)$ for each positive integer $k$.
By Lemma~\ref{Dihedral Vanishing Lemma}\eqref{D class}, 
$\gamma$ 
vanishes on exactly $3$ classes, so $z_\II(\gamma)=\frac{3}{2^{n-1}+3}<\epsilon$. 
Therefore, by Lemma~\ref{Limit Lemma}, the sequence $\theta_\II(\gamma^k)$ ($k=1,2,\ldots$) starts out less than $\epsilon$ and tends to $1$ with steps of size less than $\epsilon$. Hence, for any $L\in [0,1]$, there exists a $k$ such that 
$|\theta_\II(\gamma^k)-L|<\epsilon$.
\end{proof}

\begin{proof}[Proof of Theorem~\ref{global 01}]%
Let $\epsilon>0$. Consider first $\theta_\II$. By Lemma~\ref{Dihedral Lemma}, the nilpotent dihedral groups $G_n$ satisfy
\[u_\II(G_n)=\frac{4}{2^{n-1}+3},\qquad
  z_\II(G_n)=\frac{2^{n-2}(n+3)-2}{(2^{n-1}+3)^2}.\]
Let $l\geq 2$ be an integer such that $u_\II(G_l)<\frac{\epsilon}{2}$ and $z_\II(G_l)<\frac{\epsilon}{2}$.
By Lemma~\ref{Prop Nilp}, $u_\II(G_l^k)=u_\II(G_l)^k$ for each positive integer $k$,
 so the sequence $u_\II(G_l^k)$ (${k=1,2,\ldots}$) starts out less than $\frac{\epsilon}{2}$ and tends monotonically to $0$.
By Lemma~\ref{Limit Lemma},
the sequence $z_\II(G_l^k)$ ($k=1,2,\ldots$) starts out less than $\frac{\epsilon}{2}$ and tends to $1$ with steps of size
less than $\frac{\epsilon}{2}$.
The sequence $\theta_\II(G_l^k)$ ($k=1,2,\ldots$) therefore starts out
less than $\epsilon$ and tends to~$1$ with steps of size less than $\epsilon$. Hence, for any $L\in[0,1]$, there exists a $k$ such that ${|\theta_\II(G_l^k)-L|<\epsilon}$.

Let $u\in\{u_I,u_\II\}$.
By Lemma~\ref{Extraspecial Lemma}, we have that the extraspecial groups $H_n$ of order $2^{2n+1}$ satisfy $u(H_n)=\frac{2^{2n}}{2^{2n}+1 }$.
Let $m$ be a positive integer such that
${|u(H_m)-1|<\epsilon}$.
By  Lemma~\ref{Prop Nilp}, we have 
$u(H_m^k)=u(H_m)^k$ for each positive integer $k$. 
Therefore, the sequence $u(H_m^k)$ ($k=1,2,\ldots$) starts out less than $\epsilon$ away from $1$ and tends to $0$ 
with steps of size less than $\epsilon$. Hence, for any $L\in [0,1]$, there exists a $k$ such that 
$|u(H_m^k)-L|<\epsilon$.

Let $z\in\{z_I,z_\II\}$. Let $r$ be a positive integer such that $z(H_r)<\epsilon$, which is possible by Lemma~\ref{Extraspecial Lemma}.
By Lemma~\ref{Limit Lemma}, the sequence $z(H_r^k)$ ($k=1,2,\ldots$)
starts out less than $\epsilon$ and tends to $1$ with steps of size less than $\epsilon$. Hence, for any $L\in[0,1]$, there exists a $k$ such that
$|z(H_r^k)-L|<\epsilon$. (Cf. \cite{Miller2019}.)
\end{proof}

\end{document}